\def\R{\mathbb{R}}
\numberwithin{equation}{section}
\newcommand{\be}{\begin{equation}}
\newcommand{\ee}{\end{equation}}
\newcommand{\ba}{\begin{array}}
\newcommand{\ea}{\end{array}}
\newcommand{\bpm}{\begin{pmatrix}}
\newcommand{\epm}{\end{pmatrix}}
\newcommand{\bd}{\begin{definition}}
\newcommand{\ed}{\end{definition}}
\newtheorem{theorem}{Theorem}[section]
\newtheorem{lemma}[theorem]{Lemma}
\newtheorem{cor}[theorem]{Corollary}
\theoremstyle{remark}
\newtheorem{remark}{Remark}
\theoremstyle{definition}
\newtheorem{definition}{Definition}[section]
\begin{document}

\title{On a transformation of Bohl and its discrete analogue}
\author{Evans M. Harrell II}
\address{School of Mathematics,
Georgia Institute of Technology,
Atlanta, GA 30332-0610 USA}
\email{harrell@math.gatech.edu}
\author{Manwah Lilian Wong}
\address{School of Mathematics,
Georgia Institute of Technology,
Atlanta, GA 30332-0610 USA}
\email{wong@math.gatech.edu}



\begin{abstract}
Fritz Gesztesy's varied and prolific career has 
produced many transformational 
contributions to the spectral theory of one-dimensional Schr\"o\-dinger equations.  
He has often done this by revisiting the insights of great 
mathematical analysts of the past, connecting them in 
new ways, and reinventing them in a thoroughly modern context.

In this short note we recall and relate some 
classic transformations that figure among 
Fritz Gestesy's favorite tools of spectral theory, 
and indeed thereby make connections
among some of his favorite scholars of the past, 
Bohl, Darboux, and Green.  After doing this in the context of 
one-dimensional Schr\"odinger equations on the line,
we obtain some novel analogues for discrete
one-dimensional Schr\"odinger equations.
\smallskip

Dem einzigartigen Fritz gewidmet.
\end{abstract}
\maketitle

\section{Introduction}

In 1906 \cite{Boh}, Bohl introduced a nonlinear transformation for solutions of Sturm-Liouville equations, which is an exact, albeit implicit, counterpart to the Liouville-Green aproximation \cite{Olv}.  Bohl used the transformation as a tool in oscillation theory, and this has continued to be the main use of the Bohl transformation in the hands of later authors.  Notably, R\'ab \cite{Rab} used the Bohl transformation to prove necessary and sufficient conditions for oscillation of solutions, and showed that it is an effective foundation for Sturm-Liouville oscillation theory.  Willett's lecture in \cite{Wil} provides a clear description in English of the Bohl transformation in oscillation
theory, including the contributions of R\'ab, while Reid's monograph \cite{Re80} 
compares and contrasts it with the Pr\"ufer transformation.  
See also \cite{Re71,GeSiTe,GeUn,GoSM1,GoSM2}.

In \cite{DaHa} \S 4, Davies and Harrell introduced a non-oscillatory variant of the Bohl transformation 
to connect the notions of Liouville-Green approximation, Green functions, and Agmon metrics for exponential decay of solutions.  Some spectral bounds were derived as consequences.  This analysis was extended
in a series of articles by Chernyavskaya and Shuster 
(e.g., \cite{Che,ChSh99,ChSh07}), to address questions of solvability, regularity, 
estimates of Green functions,
and asymptotics in Sturm-Liouville theory.

In this note we begin with a largely expository treatment of the classic Bohl transformation, 
concentrating for simplicity on the situation where all coefficients are real and 
regular and the Sturm-Liouville equation is in the standard form
of the one-dimensional Schr\"odinger equation.  Then in the last section 
we show how the technique can be adapted to the case of a discrete Schr\"odinger 
equation on the integers.

\section{The interplay of the Bohl and Green functions}\label{B&G}

Let $V$ be real-valued and continuous, and consider a solution basis 
for the Sturm-Liouville equation
\begin{equation}\label{SLE}
-u^{\prime\prime} + V(x) u = 0;
\end{equation}
we may normalize the basis $u_{1,2}(x)$
so that $W[u_1,u_2] := u_1 u_2^\prime - u_2 u_1^\prime = 1$.  
(There is no assumption of an eigenvalue 
$0$.  For our purposes a possible nonzero spectral parameter has simply been incorporated into $V$.)
The Bohl transformation maps this solution basis onto a second solution basis
with remarkable properties, some of which are collected 
in a nutshell version in this section.

\begin{definition}\label{BohlDef}
Given a solution basis 
$\{u_{1,2}(x)\}$ of \eqref{SLE},
chosen so that the Wronskian 
$W[u_1,u_2] := u_1 u_2^\prime - u_2 u_1^\prime = 1$, we
define the {\it diagonal function} by
\begin{equation}\label{ZDef}
Z[u_1,u_2](x) := (u_1(x)u_2(x))^{1/2}.
\end{equation}
The {\it Bohl transformation} of $\{u_{1,2}(x)\}$
is an equivalent solution basis of \eqref{SLE},
defined in terms of $\{u_{1,2}(x)\}$ by
\begin{equation}\label{phipm}
{\mathcal B}:\{u_1(x), u_2(x)\} \to
\left\{\phi^{\pm}(x) := Z(x) \exp{\left(\pm \int_{}^x{\frac{1}{2 Z^2(t)} dt}\right)}\right\}.
\end{equation}
\end{definition}

\begin{remark}
a)  The choice of the complex 
phase of the square root in \eqref{ZDef} is unimportant, but should be continuous in $x$.  
For brevity we write $Z(x)$ for $Z[u_1,u_2](x)$ when the dependence on $u_{1,2}$ is clear.  
The reason for calling it the diagonal function is that, as will be seen below,
$$
Z^2(x) = G_0(x,x),
$$
where $G_0(x,x)$ is the diagonal of a certain Green function $G_0(x,y)$
for \eqref{SLE}.  Among the useful properties of the function $Z$ 
is that it solves the {\it diagonal differential equation}
\begin{equation}\label{ZDE}
{\mathcal J}[Z] := -Z^{\prime\prime} + V(x) Z - \frac{1}{4 Z^3} = 0,
\end{equation}
\cite{DaHa,GoSM1,GoSM2,Wil}.
\medskip

\noindent
b).  In fact, with the oscillatory situation in mind
Bohl originally wrote the  
solution basis in the Liouville-Green form
$$
\frac{1}{\sqrt{R}} \sin\left(\int^x{R(t) dt} \right)
$$
and
$$
\frac{1}{\sqrt{R}} \cos\left(\int^x{R(t) dt} \right),
$$
which is equivalent to \eqref{phipm}
under the identification $2 Z^2 \to i/R$
and some harmless linear combinations.

\noindent
c).  We recall that Gesztesy and Simon \cite{GeSi}
have made connections
between the Krein spectral shift function, the related Xi function,
and the diagonal of the Green function.
\end{remark}

Calling upon 
\cite{DaHa} \S 4,
we collect some facts, which are verifiable directly:

\begin{theorem}\label{BTfacts}\hfill
\begin{enumerate}
\item
If $u_1(x)$ and $u_2(x)$ are solutions to \eqref{SLE}
such that $W[u_1,u_2] = 1$, and
$u_1(x) u_2(x)$ does not vanish on the interval 
$(a,b)$, then 
$Z[u_1,u_2](x)$ satisfies \eqref{ZDE} on $(a,b)$.
\item
If $Z$ is a nonvanishing solution of 
\eqref{ZDE} on $(a,b)$, then 
$\phi^{\pm}(x)$ as defined in \eqref{phipm}
provide a pair of independent solutions of \eqref{SLE} on $(a,b)$.
In particular, each $\phi^{\pm}(x)$ is a linear combination of $\left\{u_{1,2}\right\}$
and vice versa.
\item
If $x_{>,<} := \max(x,y)$, resp. $\min(x,y)$, then
\begin{equation}\label{Gfn}
G_0(x,y) := Z(x)Z(y) \exp{\left(- \int_{x_<}^{x_>}{\frac{1}{2 Z^2(t)} dt}\right)}
\end{equation}
is a Green function for \eqref{SLE}, in the sense that
\begin{equation}
\left(-\frac{\partial^2}{\partial x^2} + V(x)\right) G_0(x,y) = \delta(x-y).
\end{equation}
\end{enumerate}
\end{theorem}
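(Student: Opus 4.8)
The plan is to verify each of the three assertions by direct substitution, exploiting in every case the single algebraic identity $(u_1'u_2-u_1u_2')^2 = W[u_1,u_2]^2 = 1$ together with $u_i'' = Vu_i$.

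For (1), I would set $p := u_1u_2$, so $Z = p^{1/2}$, and first compute $p'' = u_1''u_2 + 2u_1'u_2' + u_1u_2'' = 2Vp + 2u_1'u_2'$ from \eqref{SLE}. Writing $Z'' = \tfrac12 p^{-1/2}p'' - \tfrac14 p^{-3/2}(p')^2$ and substituting, the terms proportional to $V$ cancel and one is left with
\begin{equation*}
-Z'' + VZ = p^{-3/2}\left(\tfrac14(p')^2 - p\,u_1'u_2'\right).
\end{equation*}
Expanding $(p')^2 = (u_1'u_2+u_1u_2')^2$ and subtracting $4p\,u_1'u_2' = 4u_1u_2u_1'u_2'$ collapses the parenthesis to $\tfrac14(u_1'u_2-u_1u_2')^2 = \tfrac14 W[u_1,u_2]^2 = \tfrac14$, so $-Z''+VZ = \tfrac14 p^{-3/2} = \tfrac{1}{4Z^3}$, which is \eqref{ZDE}. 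The hypothesis that $u_1u_2$ does not vanish on $(a,b)$ is precisely what makes $Z$, with a continuous choice of root, a well-defined nonvanishing $C^2$ function there, so each step is legitimate.

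For (2), abbreviate $\psi(x) := \int^x \tfrac{dt}{2Z^2(t)}$, so that $\phi^{\pm} = Z e^{\pm\psi}$ and $\psi' = \tfrac{1}{2Z^2}$. Two differentiations give $(\phi^\pm)' = (Z' \pm \tfrac{1}{2Z})e^{\pm\psi}$, and then, after the term $\mp \tfrac{Z'}{2Z^2}$ cancels against $\pm\tfrac{1}{2Z^2}\cdot Z'$,
\begin{equation*}
(\phi^\pm)'' = \left(Z'' + \tfrac{1}{4Z^3}\right)e^{\pm\psi},
\end{equation*}
whence $-(\phi^\pm)'' + V\phi^\pm = \left(-Z'' + VZ - \tfrac{1}{4Z^3}\right)e^{\pm\psi} = {\mathcal J}[Z]\,e^{\pm\psi} = 0$. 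A one-line computation gives $W[\phi^+,\phi^-] = Z(Z'-\tfrac1{2Z}) - Z(Z'+\tfrac1{2Z}) = -1 \neq 0$, so $\{\phi^+,\phi^-\}$ is a basis of the two-dimensional solution space of \eqref{SLE}, which forces the mutual linear-combination statement. (Changing the lower limit in $\psi$ multiplies $\phi^\pm$ by reciprocal constants, changing nothing here.) For (3), I would first observe that $-\int_{x_<}^{x_>}\tfrac{dt}{2Z^2} = \psi(x_<) - \psi(x_>)$, so that \eqref{Gfn} is exactly $G_0(x,y) = \phi^+(x_<)\,\phi^-(x_>)$. By part (2), for $x\neq y$ each factor solves \eqref{SLE} in $x$, so $(-\partial_x^2+V)G_0 = 0$ off the diagonal; at $x=y$ the two branches agree (both equal $\phi^+(y)\phi^-(y)$), so $G_0(\cdot,y)$ is continuous; and the jump of $\partial_x G_0$ across $x=y$ is $\phi^+(y)(\phi^-)'(y) - (\phi^+)'(y)\phi^-(y) = W[\phi^+,\phi^-](y) = -1$. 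A function with a continuous graph and a jump of $-1$ in its derivative at $x=y$ has distributional second derivative equal to its classical second derivative minus $\delta(x-y)$; combining this with the off-diagonal identity gives $(-\partial_x^2 + V)G_0(\cdot,y) = \delta(\cdot - y)$. As a byproduct $G_0(x,x) = Z(x)^2$, justifying the name ``diagonal function.''

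None of these computations is deep; the only points needing care are bookkeeping ones: tracking signs in the Wronskian and in the derivative jump so that one lands on $+\delta$ rather than $-\delta$, handling the possibly complex-valued branch of $Z$ consistently (the displayed equalities are identities for any continuous nonvanishing branch), and phrasing the jump argument as a genuine distributional statement rather than a formal one. I expect the ``hard part,'' such as it is, to be presenting these verifications compactly enough that the structural point — that $Z$, the pair $\phi^{\pm}$, and $G_0$ are three faces of the same object — is not buried in algebra.
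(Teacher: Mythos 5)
Your verification is correct in all three parts, including the sign bookkeeping: the Wronskian $W[\phi^+,\phi^-]=-1$ and the derivative jump of $-1$ across $x=y$ do combine to give $+\delta(x-y)$, and the identity $\tfrac14(p')^2-p\,u_1'u_2'=\tfrac14 W[u_1,u_2]^2$ is exactly the mechanism behind \eqref{ZDE}. The paper itself gives no proof --- it states that these facts are ``verifiable directly,'' deferring to \cite{DaHa} \S 4 --- so your direct substitution argument is precisely the intended route and nothing further is needed.
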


As an integral kernel, $G_0$ defines the inverse of a particular realization of $- \frac{d^2}{dx^2} + V$, but not {\em a priori} one for which the domain of definition includes 
$u_1$ or $u_2$, because of a possible mismatch of boundary conditions at finite points.  This issue is not important 
for questions of oscillation or asymptotic behavior at infinity, but another concern remains, 
namely the possibility that $Z$ vanishes,
which would invalidate the transformation.  Because of this we recall that in the absence of imposed
finite boundary conditions, complex solutions can always be used to
prevent $Z$ from vanishing: 

\begin{lemma}\label{nonvan}
Suppose that $u$ is a solution of \eqref{SLE} on a 
finite or infinite interval $(a,b)$, and that at some $x_0 \in (a,b)$, ${\rm Re}(u(x_0)) {\rm Im}(u(x_0)) \ne 0$ and 
$u^\prime(x_0)/u(x_0) \notin \R$.  Then $u$ does not vanish on $(a,b)$.
\end{lemma}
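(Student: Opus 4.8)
The plan is to reduce everything to the constancy of a Wronskian. Write $u_1:={\rm Re}(u)$ and $u_2:={\rm Im}(u)$; since $V$ is real, both $u_1$ and $u_2$ are real solutions of \eqref{SLE}, and exactly the computation used in Definition~\ref{BohlDef} to normalize a basis — $(u_1u_2'-u_2u_1')'=u_1u_2''-u_2u_1''=V(u_1u_2-u_2u_1)=0$ — shows that $W[u_1,u_2]$ is a real constant on $(a,b)$. The hypothesis ${\rm Re}(u(x_0)){\rm Im}(u(x_0))\ne0$ guarantees in particular that $u(x_0)\ne0$, so that the quotient $u'(x_0)/u(x_0)$ is a well-defined finite complex number, and the condition $u'(x_0)/u(x_0)\notin\R$ then genuinely says that its imaginary part is nonzero.

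Next I would compute that imaginary part in terms of the Wronskian. From $u'\overline{u}=(u_1'+iu_2')(u_1-iu_2)=(u_1u_1'+u_2u_2')+i\,(u_1u_2'-u_2u_1')$ one reads off
\begin{equation}\label{imquot}
{\rm Im}\!\left(\frac{u'(x_0)}{u(x_0)}\right)=\frac{{\rm Im}\!\left(u'(x_0)\overline{u(x_0)}\right)}{|u(x_0)|^2}=\frac{W[u_1,u_2]}{|u(x_0)|^2}.
\end{equation}
By hypothesis the left-hand side of \eqref{imquot} is nonzero, hence $W[u_1,u_2]$ is a nonzero constant throughout $(a,b)$.

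Finally, suppose toward a contradiction that $u(x_1)=0$ for some $x_1\in(a,b)$. Then $u_1(x_1)=u_2(x_1)=0$, so $W[u_1,u_2](x_1)=u_1(x_1)u_2'(x_1)-u_2(x_1)u_1'(x_1)=0$, contradicting the previous paragraph; therefore $u$ vanishes nowhere on $(a,b)$. I do not anticipate any real obstacle here: the entire content is the elementary observation that ``$u'(x_0)/u(x_0)$ is non-real'' is merely a disguised way of saying ``the Wronskian of the real and imaginary parts of $u$ is nonzero,'' after which constancy of that Wronskian finishes the argument. Two minor points remain, each routine: the auxiliary hypothesis ${\rm Re}(u(x_0)){\rm Im}(u(x_0))\ne0$ serves only to force $u(x_0)\ne0$; and in the application one passes to the basis $\{u,\overline u\}$ (for which $W[u,\overline u]=-2i\,W[u_1,u_2]\ne0$), so that, after normalization, $Z^2$ is a constant multiple of $|u|^2$ and hence nonvanishing, the branch of the square root being chosen continuously in $x$ as in the remark above.
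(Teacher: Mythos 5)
Your proof is correct and follows essentially the same route as the paper: decompose $u$ into its real and imaginary parts, observe that each solves \eqref{SLE}, and show their (constant) Wronskian is nonzero from the hypothesis ${\rm Im}(u'(x_0)/u(x_0))\ne 0$, so they cannot vanish simultaneously. Your identity ${\rm Im}(u'\overline{u})=W[{\rm Re}\,u,{\rm Im}\,u]$ is a slightly cleaner computation than the paper's manipulation of the logarithmic derivatives of ${\rm Re}\,u$ and ${\rm Im}\,u$, and it correctly exposes that only $u(x_0)\ne 0$ is really needed, but the underlying argument is the same.
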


\begin{proof}
Because $V$ is real-valued, ${\rm Re} \, u$ and ${\rm Im} \, u$ each satisfy \eqref{SLE}, and it therefore suffices to show that they are independent.

Letting $\, \alpha = u^\prime(x_0)/u(x_0)$, a calculation shows that.
\begin{align*}
&\frac{{\rm Re} \, u^\prime(x_0)}{{\rm Re} \, u(x_0)} = {\rm Re} \, \alpha - {\rm Im} \, \alpha \frac{{\rm Im} \, u(x_0)}{{\rm Re} \, u(x_0)},\\
&\frac{{\rm Im} \, u^\prime(x_0)}{{\rm Im} \, u(x_0)} = {\rm Re} \, \alpha + {\rm Im} \, \alpha \frac{{\rm Re} \, u(x_0)}{{\rm Im} \, u(x_0)}.
\end{align*}
It follows that
$$
\frac{{\rm Re} \, u^\prime(x_0)}{{\rm Re} \, u(x_0)} - \frac{{\rm Im} \, u^\prime(x_0)}{{\rm Im} \, u(x_0)} 
= - {\rm Im} \, \alpha\left(\frac{{\rm Im} \, u(x_0)}{{\rm Re} \, u(x_0)} + \frac{{\rm Re} \, u(x_0)}{{\rm Im} \, u(x_0)} \right),
$$
and therefore
$$
W[{\rm Im} \, u, {\rm Re}\, u] = - {\rm Im} \, \alpha \left(({\rm Re} \, u(x_0))^2 + ({\rm Im} \, u(x_0))^2 \right) \ne 0. 
$$
\end{proof}

This standard lemma implies that given any two linearly independent solutions of \eqref{SLE} it is always possible to find 
a pair of complex-valued linearly independent combinations that are non-vanishing on $(a,b)$.  The Wronskian of the 
new pair may be set to 1 by
multiplying one solution by an appropriate constant,
justifying the conclusions of Theorem \ref{BTfacts}.  
It is of some use to consider a particular $Z$ determined as follows.

By construction any solution $u$ as set forth in Lemma \ref{nonvan} will be linearly independent of its complex conjugate.  We may therefore choose a complex number $\alpha$ so that $W[u, \alpha^2 \overline{u}] = 1$, 
which ensures that 
\begin{equation}\label{specialZ}
Z(x) = \alpha |u(x)|.
\end{equation}
Indeed, $\arg(\alpha)$ is restricted 
to the values $k\pi/4$ for integer $k$,
as can be seen for example from \eqref{ZDE}, which implies that
\begin{equation}
- |u|^{\prime\prime} + V |u| = \frac{1}{4 \alpha^4 |u|^3} \in \R.
\end{equation}
(In passing we note the implication that the expression on the left does not change sign.)  In fact, there are only 
two truly distinct cases for $\arg(\alpha)$,  {\em viz}.,\ $0$ and $\frac{\pi}{4}$, 
due to the simple scalings in \eqref{ZDE} and 
Theorem~\ref{BTfacts} when $Z$ is replaced by $i Z$.
If $\alpha > 0$, i.e., $Z(x) > 0$, then the solutions $\phi^\pm$ do not change sign, 
which corresponds to the case of {\em disconjugacy}
for the ODE \eqref{SLE} in the classical theory \cite{Hart,Re71}.   This situation 
was the focus of \cite{DaHa}, in which some spectral bounds were derived and it was argued 
that $1/2Z^2$ defines an Agmon metric.

Otherwise, it may be assumed without of loss of generality that $\arg(\alpha) = \frac{\pi}{4}$, 
for which solutions may oscillate, in that their arguments increase or decrease by
$n \pi$ for $n > 1$ as $x \to \infty$.  A central question of Sturmian theory
is whether solutions oscillate infinitely often, or only finitely often.  When the
increase in the argument of a solution is infinite, the equation \eqref{SLE}
is said to be {\em oscillatory}.
An approach to oscillation theory, equivalent to that of 
\cite{Rab,Wil} but bringing out the role of Green functions, 
can be based on the following version of a result of Gagliardo,
as cited in \cite{Wil}:

\begin{cor}
[Cf. \cite{Wil}, Corollary~3.2.]
Suppose that \eqref{SLE} holds on an infinite interval $(a, \infty)$, and let $G_B(x,y)$ be the Green function
constructed according to the prescription
leading to \eqref{specialZ}.
Then either $G_B(x,y) \in \R$ for all $x,y$, 
in which case the solution basis $\phi^\pm$ is nonoscillatory, or else:
The phase of $\phi^\pm$ has only finite increase on $(a,\infty)$ iff $1/G(x,x) \in L^1(a,\infty)$.
\end{cor}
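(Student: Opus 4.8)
The plan is to read the dichotomy directly off the explicit formulas \eqref{phipm} and \eqref{Gfn} applied to the distinguished $Z$ of \eqref{specialZ}, using the two-case reduction recorded in the paragraphs above: $Z$ may be taken either real and positive on $(a,\infty)$ (the disconjugate case, $\arg(\alpha)=0$) or purely imaginary in the sense that $Z^2(x)=i\,|\alpha|^2|u(x)|^2$ (the case $\arg(\alpha)=\tfrac{\pi}{4}$). First I would dispose of the disconjugate alternative. If $Z>0$, the integrand $1/(2Z^2(t))$ in \eqref{phipm} is real and positive, so $\phi^{\pm}$ are strictly positive functions; since $\phi^+/\phi^-=\exp\!\big(\int^x dt/Z^2(t)\big)$ is strictly monotone, every real solution of \eqref{SLE} has at most one zero and the basis is nonoscillatory. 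Moreover every factor on the right of \eqref{Gfn} is then real, so $G_B(x,y)\in\R$ for all $x,y$; conversely $G_B(x,x)=Z(x)^2$, so $G_B$ fails to be real-valued precisely in the other case. This ties the first branch of the dichotomy to disconjugacy.

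Then I would treat the case $\arg(\alpha)=\tfrac{\pi}{4}$, where $Z^2(x)=i\,|\alpha|^2|u(x)|^2$, hence $1/(2Z^2(t))=-i/(2|\alpha|^2|u(t)|^2)$ is purely imaginary with sign-definite imaginary part. Writing $\psi(x):=\int_{x_0}^x dt/(2|\alpha|^2|u(t)|^2)$, a real and monotone function, \eqref{phipm} becomes $\phi^{\pm}(x)=|\alpha||u(x)|\,e^{\,i(\pi/4\,\mp\,\psi(x))}$, so the phase of $\phi^{\pm}$ is the monotone function $\pi/4\mp\psi(x)$ up to an additive constant, and its total increase over $(a,\infty)$ equals $\psi(\infty^-)-\psi(a^+)=\int_a^\infty dt/(2|\alpha|^2|u(t)|^2)$; this is finite iff $\int_a^\infty dt/|u(t)|^2<\infty$. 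On the diagonal, \eqref{Gfn} collapses to $G_B(x,x)=Z(x)^2=i\,|\alpha|^2|u(x)|^2$, whence $|1/G_B(x,x)|=1/(|\alpha|^2|u(x)|^2)$ and $1/G_B(\cdot,\cdot)\in L^1(a,\infty)$ is equivalent to the same integrability. Concatenating the two equivalences yields the asserted iff.

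The step I expect to be the real obstacle is not any of these computations but the bookkeeping that makes the dichotomy airtight: that ``$G_B(x,y)\in\R$ for all $x,y$'' is \emph{exactly} the disconjugate alternative, and is not an artifact of the normalization $W[u,\alpha^2\overline u]=1$. For this I would lean on the identity $-|u|''+V|u|=1/(4\alpha^4|u|^3)$ derived above from \eqref{ZDE}: its left side is real and, as noted there, does not change sign, which forces $\alpha^4\in\R$; together with the fact that $\arg(\alpha)$ is constrained to multiples of $\pi/4$, and after the harmless replacement $Z\to iZ$ of Theorem~\ref{BTfacts}, this leaves only $\arg(\alpha)\in\{0,\tfrac{\pi}{4}\}$, the sign of $-|u|''+V|u|$ selecting the branch independently of the scaling of $u$. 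I would also make explicit the link between ``finite increase of the phase'' and classical (non)oscillation: since $\phi^{\pm}$ themselves never vanish, the oscillation of \eqref{SLE} is read off from the zeros of ${\rm Re}(c\,\phi^+)$ for constants $c$, which occur precisely when the monotone phase of $\phi^+$ crosses a value in $\arg(\overline c)+\tfrac{\pi}{2}+\pi\Z$; hence infinitely many zeros $\iff$ unbounded phase, reconciling the statement with \cite{Rab,Wil}. Finally I would note that, as is customary in this circle of results, the integrability in $1/G(x,x)\in L^1(a,\infty)$ is genuinely an issue only near $\infty$---the behaviour near a finite endpoint $a$ being controlled by the continuity and non-vanishing of $u$ there, once the interval is taken so that $u$ does not vanish at $a$.
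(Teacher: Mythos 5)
Your argument is correct, but note that the paper itself offers no proof of this corollary: it is stated as a rephrasing, in Green-function language, of a result of Gagliardo as cited in \cite{Wil}, and the reader is left to make the translation from the preceding construction. Your proposal supplies exactly that translation, and it does so in the way the surrounding text intends: reduce to the two genuinely distinct cases $\arg(\alpha)\in\{0,\pi/4\}$ via \eqref{specialZ} and the $Z\to iZ$ scaling; in the disconjugate case read off from \eqref{phipm} and \eqref{Gfn} that $\phi^{\pm}$ are nonvanishing real multiples of positive functions and $G_B$ is real; in the case $\arg(\alpha)=\pi/4$ observe that $1/(2Z^2)$ is purely imaginary, so the modulus of $\phi^{\pm}$ is $|\alpha|\,|u|$ and the phase is $\pi/4\mp\psi$ with $\psi(x)=\int^x dt/(2|\alpha|^2|u(t)|^2)$ monotone, while $G_B(x,x)=Z(x)^2=i|\alpha|^2|u(x)|^2$, so finiteness of the total phase increase and $1/G_B(\cdot,\cdot)\in L^1(a,\infty)$ are literally the same integral condition $\int_a^\infty dt/|u(t)|^2<\infty$. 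Two small remarks: your closing caveat about the finite endpoint $a$ is unnecessary, since both sides of the equivalence are controlled by the identical integral over $(a,\infty)$, so no separate endpoint bookkeeping is needed; and your ``airtight dichotomy'' paragraph (reality of $G_B$ exactly in the disconjugate branch, and the zero-counting link between unbounded phase and classical oscillation) is a useful elaboration of points the paper takes for granted from the discussion following Lemma \ref{nonvan} and from \cite{Rab,Wil}, rather than something the paper proves. In short: correct, and it makes explicit an argument the paper delegates to the literature.
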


As has been known since the work of Wigner and von Neumann and Wigner,
it is possible for eigenvalues to be embedded in the continuous spectrum of
Sturm-Liouville equations \cite{NeWi,ReSi}.  This phenomenon requires oscillatory solutions 
to be square-integrable, and can thus be related to the Bohl transformation as follows:

\begin{cor}
[R\'ab]
An oscillatory solution of \eqref{SLE}, written in the form
$$
Z(x) \exp{\left(\pm \frac{i}{2} \int_a^x{\frac{1}{|Z(t)|^2} dt}\right)},
$$
exists and is square integrable if and only if the nonlinear equation
$$
- w^{\prime\prime}(x) + V(x) w(x) = - \frac{1}{4 w^3(x)}
$$
has a square-integrable solution.
\end{cor}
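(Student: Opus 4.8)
The plan is to observe that, in the oscillatory branch of the Bohl transformation, the \emph{real} function $w(x):=|Z(x)|$ is exactly the bridge between the diagonal equation \eqref{ZDE} and the nonlinear equation of the statement, and then to read off the $L^2$ equivalence from the fact that the exponential factor in \eqref{phipm} is unimodular in this branch, so that $|\phi^{\pm}(x)|=|Z(x)|=w(x)$. Recall that ``oscillatory'' designates the case $\arg\alpha=\pi/4$, i.e.\ $Z^2$ a positive multiple of $i$; equivalently $Z=e^{i\pi/4}w$ for a positive real $w$, and then $1/(2Z^2)=-i/(2|Z|^2)$, so the functions displayed in the statement are precisely the $\phi^{\pm}$ of \eqref{phipm} built from this $Z$.

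For the ``only if'' direction one supposes that an oscillatory solution of \eqref{SLE} of the displayed form lies in $L^2(a,\infty)$. Taking $Z$ as in \eqref{specialZ}, so $Z=e^{i\pi/4}w$ with $w:=|Z|=|\alpha|\,|u|>0$, the identity $-|u|''+V|u|=1/(4\alpha^4|u|^3)$ recorded after \eqref{specialZ}, specialized to $\alpha^4=-|\alpha|^4<0$ and multiplied through by $|\alpha|$, becomes $-w''+Vw=-1/(4w^3)$; and since $|\phi^{\pm}|=w$ one gets $w\in L^2(a,\infty)$, the required square-integrable solution of the nonlinear equation. Conversely, given a square-integrable solution $w$ of $-w''+Vw=-1/(4w^3)$ on $(a,\infty)$, one notes that $w$ cannot vanish (the right-hand side is singular at a zero of $w$), so after a sign change $w>0$; running the previous computation backwards, $Z:=e^{i\pi/4}w$ is a nonvanishing solution of \eqref{ZDE}, and by Theorem~\ref{BTfacts}(2) the pair $\phi^{\pm}(x)=Z(x)\exp\!\big(\pm\int^x dt/(2Z^2(t))\big)=Z(x)\exp\!\big(\mp\tfrac{i}{2}\int^x dt/|Z(t)|^2\big)$ is an independent pair of solutions of \eqref{SLE} of the displayed form, with $|\phi^{\pm}|=w\in L^2(a,\infty)$.

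What remains, and is the main obstacle, is to verify that the pair just constructed is genuinely oscillatory, i.e.\ that its phase $\tfrac{1}{2}\int_a^x dt/w^2(t)$ is unbounded; this is the only place where square-integrability of $w$ is used, through the Cauchy--Schwarz inequality $b-a=\int_a^b 1\,dt\le\big(\int_a^b w^2\,dt\big)^{1/2}\big(\int_a^b w^{-2}\,dt\big)^{1/2}$, which shows that $\int_a^\infty w^2\,dt<\infty$ forces $\int_a^\infty w^{-2}\,dt=\infty$, i.e.\ $1/G_B(x,x)=1/|Z(x)|^2\notin L^1(a,\infty)$; the preceding corollary (or a direct count of the zeros of $\mathrm{Re}\,\phi^{\pm}$) then gives unbounded phase. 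The other point requiring care is the sign bookkeeping around the rotation $Z=e^{i\pi/4}w$: it is exactly the oscillatory branch in which $Z^2$ is imaginary, and this is what flips the sign of the cubic term relative to \eqref{ZDE} and produces the minus sign in $-w''+Vw=-1/(4w^3)$. Everything else is a direct appeal to Theorem~\ref{BTfacts} and to the construction surrounding \eqref{specialZ}.
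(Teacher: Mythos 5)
Your argument is correct and follows exactly the route the paper sets up (and leaves implicit, since the corollary is stated without a written proof): the oscillatory branch $Z=e^{i\pi/4}w$, the identity recorded after \eqref{specialZ} turning \eqref{ZDE} into $-w''+Vw=-1/(4w^3)$, unimodularity of the exponential so that $|\phi^\pm|=w$, and Theorem~\ref{BTfacts}(2) for the converse. Your Cauchy--Schwarz check that square-integrability of $w$ forces $\int w^{-2}\,dt=\infty$, hence genuinely unbounded phase, is precisely the point the paper delegates to the preceding Gagliardo-type corollary, so nothing is missing.
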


We close this section with a Darboux-type factorization
\cite{Dar,GeTe1}, the novel feature of which is 
the role played by the diagonal function
and the Bohl solution basis
\eqref{phipm}.
For any complex valued, nonvanishing function
$Z(x) \in AC^1[(a,b)]$, define
$$
{\mathcal D}^\pm[Z] := \frac{d}{dx} - \frac{Z^\prime}{Z} \mp \frac{1}{2 Z^2}.
$$
It is immediate to see that
$$
{\mathcal D}^\pm[Z] \phi^\pm = 0,
$$
where $\phi^\pm$ are defined in terms of $Z$ by \eqref{phipm}.  
A further calculation reveals that
\begin{equation}\label{Darboux}
\left({\mathcal D}^\pm[Z] - 2 \frac{d}{dx}\right) {\mathcal D}^\pm [Z]=
-\frac{d^2}{dx^2} + \frac{Z^{\prime\prime}}{Z} + \frac{1}{4 Z^4} = -\frac{d^2}{dx^2} 
+ V(x),
\end{equation}
provided that $Z$ satisfies the diagonal differential equation \eqref{ZDE}.
An alternative way to express the two factorizations in \eqref{Darboux} is that
$$
-\frac{d^2}{dx^2} 
+ V(x) = 
\left({\mathcal D}{\Big[}\overline{Z}{\Big]}^\mp\right)^* {\mathcal D}[Z]^\pm,
$$
where $^*$ designates the formal adjoint operation.

\section{The discrete form of the Bohl ${\mathcal J}[Z]$}

In this section we show that most of the transformations and relationships presented in 
the first section have counterparts for discrete one-dimensional Schr\"odinger 
equations.
(Part of the material in this section has appeared in a 
preprint \cite{HaWo}, which has been expanded and divided
for publication as two articles.)
some details are
rather different from the continuous case, making it uncertain how far the analogy goes,
especially in the oscillatory case.
A full-fledged oscillation theory for discrete problems based
on an analogue of the Bohl transformation and its connection
to Green functions would be an interesting next project.

Let $\Delta$ denote the discrete second-difference operator on 
the positive integers.  We standardize the Laplacian such that
$(\Delta f)_n := f_{n+1} + f_{n-1} - 2 f_n$ for $f = (f_n) \in \ell^2(\mathbb{N})$,
and consider equations of the form 
\begin{equation}\label{one}
(-\Delta+V)u  =  0, 
\end{equation}
where the potential-energy 
function $V$ is a diagonal operator with 
real values $V_n$. 

Eq.~\eqref{one} and its solutions share many of the properties of 
classical Sturm-Liouville equations, as is laid out for example in \cite{Aga}.  
For our purposes we recall that:  The solution space is two-dimensional, and the Wronskian of any two solutions
\begin{equation}\label{Wdef} 
W[u^{(1)}, u^{(2)}] := u^{(1)}_n u^{(2)}_{n+1} - u^{(1)}_{n+1} u^{(2)}_n
\end{equation}
is constant.   
A {\it Green matrix} as a solution of
\begin{equation}\label{GreenDef}
(-\Delta+V) G  =  I, 
\end{equation}
where $I$ is the identity matrix, and every Green matrix can be written as 
the sum of a vector in the null space of $(-\Delta+V)$ and the particular Green matrix
\begin{equation}
G_{m,n}^{(p)} := \frac{u_{\max(m,n)}^{(1)} u_{\min(m,n)}^{(2)}}{W[u^{(1)}, u^{(2)}]},
\end{equation}
provided that $\{u^{(1)},u^{(2)}\}$ are linearly independent.

A feature of the discrete Schr\"odinger equation 
\eqref{one} 
that is not shared by \eqref{SLE} is 
an invariance under the transformation
\begin{eqnarray}\label{sym}
u_n \to (-1)^n u_n\notag\\
V_n \to - 4 - V_n,
\end{eqnarray}
as can be easily checked.
Among other things, this implies that
any fact proved under the assumption, for example, that $V_n > 0$
has a counterpart for $V_n < -4$, with systematic sign changes.


Our goal in this section is to present an analogue of the Bohl transformation for the discrete Schr\"odinger equation \eqref{one}.  In particular, we offer a discrete version of
some of the results of
\cite{DaHa}, \S 4, and show in particular that
the diagonal elements $G_{nn}$ of the Green matrix allow 
the full solution space to be recovered formulaically.
We build on some earlier
steps in this direction 
by Chernyavskaya and Shuster \cite{ChSh00,ChSh01}.  As in \cite{DaHa}
we furthermore point out connections between the diagonal of the Green
matrix and an Agmon distance for \eqref{one}.

In the discrete situation the use of exponentials of integrals is not the most natural, 
so we instead seek to represent a pair of solutions in the forms
\begin{equation}\label{prodansatz}
\varphi^+_n=z_n\prod^n_{\ell=1}S_\ell, \quad\quad\quad
\varphi^-_n=z_n\left(\prod^n_{\ell=1}S_\ell\right)^{-1}.
\end{equation}
Since the product of these two solutions 
is the diagonal of a Green matrix, up to a constant multiple, 
this suggests that if we begin by selecting a Green matrix 
such that $G_{nn}$ is nonvanishing, then we can directly define

$$
z_n := (G_{nn})^{1/2}.
$$
It remains to work out the most convenient 
form of $S^{\pm 1}_\ell$.  If the Wronskian is scaled so that
$W[\varphi^-, \varphi^+] = 1$, then substitution of the ansatz \eqref{prodansatz}
leads after a calculation to 

\begin{equation}\label{SCond}
S_n - \frac{1}{S_n} = \frac{1}{z_n z_{n-1}}.
\end{equation}

Here we pause to observe two ambiguities in relating 
$\varphi_n^{\pm}$ to the potential $V$.  The first is that, due to 
the invariance \eqref{sym}, if 
\begin{equation}
G_{mn} = \psi_{\min(m,n)}^+ \psi_{\max(m,n)}^-
\end{equation}
is the Green matrix for some potential function $V_n$, then 
the same diagonal elements $G_{nn}$ also belong to the Green
matrix for an equation of type \eqref{one} but with potential function
$\widetilde{V}_n = -4 - V_n$.  
Secondly, \eqref{SCond} is equivalent to a quadratic expression for 
$S_n$, and therefore the solution is generally nonunique.  
These ambiguities are avoided when 
the Schr\"odinger operator 
$H = -\Delta + V$
in \eqref{one} is positive, 
so $G_{nn} > 0$ and by convention $z_n > 0$.  We can then fix 
$S_n$ as the larger root of \eqref{SCond}.  

Accordingly, in this situation 
we simply define
\begin{equation}\label{Sfromz}
S_n^{[z]}:=\frac{1+\sqrt{1+ 4 z_n^2 z_{n-1}^2}}{2z_nz_{n-1}}.
\end{equation}
A pair of functions $\varphi^{\pm}_n$ can now be defined by 
the ansatz \eqref{prodansatz},
i.e., when expressed in terms of $z_n$,
\begin{align}\label{phifromz}
&\varphi^{+}_n:=z_n\prod^n_{k=m+1}\left(\frac{1+\sqrt{1+ 4 z_n^2 z_{n-1}^2}}{2z_nz_{n-1}}\right),\\
&\varphi^-_n=z_n\left(\prod^n_{k=m+1}\left(\frac{1+\sqrt{1+ 4 z_n^2 z_{n-1}^2}}{2z_nz_{n-1}}\right)\right)^{-1}.\nonumber
\end{align}

\noindent
Remarkably, with this definition,
both $\varphi^+$ and $\varphi^-$ solve a single equation
of the form \eqref{one}, where the potential function $V_n$
is determined from $z_n$ via
\begin{align}\label{Vfromz}
V^{[z]}_n&:=\frac{\Delta \varphi^+_n}{\varphi^+_n}=\frac{
1+\sqrt{1+4 z_n^2 z_{n+1}^2}}{2z^2_n}+
\frac{2z^2_{n-1}}
{1+\sqrt{1+4 z_n^2 z_{n-1}^2}} - 2 \nonumber\\
&= \frac{z_{n+1}}{z_n} S_{n+1}^{[z]}+\frac{z_{n-1}}{z_n S_n^{[z]}} - 2,
\end{align}
provided that $V_n > -2$.
(Else a different root must be chosen in 
\eqref{Sfromz}.)  
To see that $\varphi_n^\pm$ solve the same discrete Schr\"odinger equation, 
let us separately calculate
\begin{equation}\label{phiminusstuff}
\frac{\Delta\varphi^-_n}{\varphi^-_n}=\frac{z_{n+1}}{z_n S_{n+1}^{[z]}}+
\frac{z_{n-1}}{z_n}S_n^{[z]} - 2,
\end{equation}
and note that since $S_n^{[z]}$ has been chosen to satisfy \eqref{Sfromz}, 
the difference between these last 
two expressions is
$$\frac1{z^2_n} - \frac1{z^2_n}=0.$$
This leads to a theorem in the spirit of \cite{DaHa}.

\begin{theorem}\label{DaHa Thm}
Suppose that \eqref{one} has two independent positive 
solutions for $m\le n\le N$, with $N\ge M+2$, and denote the 
associated Green matrix $G_{mn}$.
Since $G_{nn} > 0$ for $m\le n\le N$, we may
define $z_n := \sqrt{G_{nn}} > 0$.
In terms of $z_n$, determine
$S_n^{[z]}$
and
$\varphi^{\pm}_n$
according to
\eqref{Sfromz} and \eqref{phifromz}.
Then
\begin{enumerate}
\item $\varphi^{\pm}_n$ is an independent pair of solutions of \eqref{one}
for $m<n\le N$.
\item 
$G_{nm}=z_nz_m\prod^n_{\ell=m+1}\frac1{S_\ell^{[z]}}$, $M<m<n\le N$.
\item
The potential function is determined from $G_{nn}$ by a nonlinear difference
equation,
\begin{equation}\label{GtoV}
\frac{1}{2} \left(\sqrt{1 + 4 G_{n\,n}G_{n+1\,n+1}} + \sqrt{1 + 4 G_{n\,n}G_{n-1\,n-1}}\right) 
= (V_n+2) G_{nn}.
\end{equation}
\end{enumerate}
\end{theorem}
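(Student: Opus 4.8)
The plan is to undo the construction by recognizing the sequences $\varphi^\pm$ as genuine solutions of \eqref{one} in disguise. Because \eqref{one} is assumed to have two independent positive solutions on the range in question, and $G$ is the associated particular Green matrix, one may normalize a pair of \emph{positive} solutions $\psi^\pm$ of \eqref{one} so that $W[\psi^-,\psi^+]=1$ and $G_{mn}=\psi^+_{\min(m,n)}\psi^-_{\max(m,n)}$; in particular $z_n^2=G_{nn}=\psi^+_n\psi^-_n$ throughout. All three assertions will then follow by feeding this factorization through the explicit formulas \eqref{Sfromz}--\eqref{Vfromz}.

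The single identity that does the work is obtained by squaring the (constant) Wronskian $W[\psi^-,\psi^+]=1$: for $k=n\pm1$,
$$
1+4G_{nn}G_{kk}=1+4\,\psi^+_n\psi^-_n\psi^+_k\psi^-_k=\bigl(\psi^+_n\psi^-_k+\psi^+_k\psi^-_n\bigr)^2,
$$
so, since all $\psi^\pm>0$, $\sqrt{1+4G_{nn}G_{kk}}=\psi^+_n\psi^-_k+\psi^+_k\psi^-_n$. Part (3) is then immediate: adding this relation for $k=n+1$ and $k=n-1$ gives $\psi^+_n(\psi^-_{n+1}+\psi^-_{n-1})+\psi^-_n(\psi^+_{n+1}+\psi^+_{n-1})$, and since $\psi^\pm$ solve \eqref{one} one has $\psi^\pm_{n+1}+\psi^\pm_{n-1}=(V_n+2)\psi^\pm_n$, whence the sum equals $2(V_n+2)\psi^+_n\psi^-_n=2(V_n+2)G_{nn}$; dividing by $2$ yields \eqref{GtoV}. (The same conclusion can be reached by rationalizing the second summand in \eqref{Vfromz} and using $G_{nn}=z_n^2$, but the Wronskian route simultaneously settles the branch issue below.)

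For parts (1) and (2) I would first derive a closed form for $S_n^{[z]}$. Combining the displayed identity at $k=n-1$ with $W[\psi^-,\psi^+]_{n-1}=\psi^+_n\psi^-_{n-1}-\psi^+_{n-1}\psi^-_n=1$ collapses the numerator of \eqref{Sfromz} to $2\psi^+_n\psi^-_{n-1}$, so that
$$
S_n^{[z]}=\frac{\psi^+_n\psi^-_{n-1}}{\sqrt{\psi^+_n\psi^-_n\,\psi^+_{n-1}\psi^-_{n-1}}}=\sqrt{\frac{\psi^+_n\psi^-_{n-1}}{\psi^+_{n-1}\psi^-_n}}\, ;
$$
in other words the ``larger root'' prescription of \eqref{Sfromz} is exactly the branch compatible with the positive factorization. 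The products in \eqref{prodansatz}--\eqref{phifromz} now telescope: $\prod_{\ell=m+1}^n S_\ell^{[z]}=\sqrt{\psi^+_n\psi^-_m/(\psi^+_m\psi^-_n)}$, and multiplying by $z_n=\sqrt{\psi^+_n\psi^-_n}$ gives $\varphi^+_n=\sqrt{\psi^-_m/\psi^+_m}\,\psi^+_n$ and, reciprocally, $\varphi^-_n=\sqrt{\psi^+_m/\psi^-_m}\,\psi^-_n$. Thus $\varphi^\pm$ are positive constant multiples of the true solutions $\psi^\pm$ of \eqref{one} (so in particular they solve \eqref{one} with the \emph{given} $V$, not merely with the $V^{[z]}$ of \eqref{Vfromz}), and $W[\varphi^-,\varphi^+]=1\neq0$ gives independence; this is (1). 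Running the telescoping in reverse gives $z_nz_m\prod_{\ell=m+1}^n (S_\ell^{[z]})^{-1}=\psi^+_m\psi^-_n=G_{nm}$ for $m<n$, which is (2).

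The step I expect to demand the most care is the very first one: checking that one really can normalize a positive pair $\psi^\pm$ with unit Wronskian realizing the given Green matrix as $G_{mn}=\psi^+_{\min}\psi^-_{\max}$, and confirming that the positive (``larger'') root in \eqref{Sfromz} is the branch this factorization produces --- the quadratic \eqref{SCond} has one positive and one negative root, and positivity of the $\psi^\pm$ must be what forces the positive choice. Once that branch bookkeeping is in place, the rest --- the Wronskian-squared identity, the telescoping, and the rearrangement into \eqref{GtoV} --- is routine manipulation of the three-term recurrence.
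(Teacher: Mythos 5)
Your proof is correct, and it reaches the conclusion by a route that differs from the paper's in a worthwhile way. Both arguments hinge on the same key identity: writing $z_n^2=G_{nn}=\psi^+_n\psi^-_n$ for a positive, Wronskian-normalized pair of solutions and observing that $1+4G_{nn}G_{n\pm1\,n\pm1}=\left(\psi^+_n\psi^-_{n\pm1}+\psi^+_{n\pm1}\psi^-_n\right)^2$; your derivation of (3) from this plus the three-term recurrence is essentially the computation the paper performs, there phrased as showing that $V^{[z]}_n$ of \eqref{Vfromz alt} coincides with $V_n$. Where you genuinely diverge is in parts (1) and (2): the paper first verifies, in the discussion preceding the theorem, that $\varphi^\pm$ solve a discrete Schr\"odinger equation with the reconstructed potential $V^{[z]}_n$ and only then identifies $V^{[z]}_n=V_n$, whereas you combine the key identity with $W[\psi^-,\psi^+]=1$ to collapse \eqref{Sfromz} to the closed form $S_n^{[z]}=\sqrt{\psi^+_n\psi^-_{n-1}/(\psi^+_{n-1}\psi^-_n)}$, telescope, and conclude that $\varphi^\pm_n$ are positive constant multiples of $\psi^\pm_n$. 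That buys you something: (1) becomes immediate, and (2) is obtained as an identity for the \emph{given} Green matrix, whereas the paper's proof only asserts that $\varphi^+_{\min(m,n)}\varphi^-_{\max(m,n)}$ is a Green function for the reconstructed operator. The bookkeeping you flag does check out: with the paper's particular Green matrix one may take $\psi^-=u^{(1)}$ and $\psi^+=u^{(2)}$, rescaled so that the Wronskian in the convention \eqref{Wdef} equals $+1$, which is exactly what forces the larger (positive) root of \eqref{SCond} and the collapse of the numerator to $2\psi^+_n\psi^-_{n-1}$; had the labels been interchanged, the roles of $\varphi^+$ and $\varphi^-$ would simply swap and (2) would follow from the symmetry of $G$. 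The paper makes the same normalization assumption without comment, so your treatment is, if anything, slightly more explicit on this point.
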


\begin{remark}\nonumber
In what follows we are mainly concerned with what happens when
$N\to\infty$.  In that case
the assumption that there are two positive solutions is 
a question of
disconjugacy in the theory of ordinary differential equations, 
cf.\ \cite{Hart,Aga}. 
If, for example, $V_n > 0$ for $n \ge N_0$, then it is 
not difficult to show that no solution can change sign more than once, and that therefore the positivity assumption is satisfied for $n$ sufficiently large.  
As will be seen in the proof, a necessary 
condition for the assumption is that  $V_n > -2$.

Per the symmetry remarked upon in \eqref{sym}, an alternative to
positivity is the assumption
that there are two solutions $\psi_n^\pm$ such that $(-1)^n \psi_n^\pm > 0$.
A sufficient condition for this is that $V_n < -4$ and a necessary condition is
that $V_n < -2$.
\end{remark}

%

\begin{proof} 
The essential calculation was provided 
in the discussion before the statement
of the theorem.  Given that the Wronskian of $\varphi^-$ and $\varphi^+$ is 1,
these two functions are linearly independent and therefore a basis for the 
solution space of
$$(-\Delta + V^{[z]}_n)\varphi=0,$$
\noindent
$V^{[z]}_n$ being defined by \eqref{Vfromz}.
Moreover,
$$G_{mn}=\varphi^+_{\min(m,n)}\varphi^-_{\max(m,n)}$$
is a Green function for $-\Delta + V^{[z]}_n$.

Hence the crux is to show that $V^{[z]}_n$ is the same as the original $V_n$ of
\eqref{one}.
Because $S_n^{[z]}$ was defined such that
\begin{equation*}
S_{n}^{[z]}-\frac1{S_{n}^{[z]}}=\frac{1}{z_nz_{n-1}},
\end{equation*}
we may rewrite \eqref{Vfromz} as
\begin{equation}\label{Vfromz alt}
V^{[z]}_n+2=
\frac{1}{2z^2_n}\left(
\sqrt{1+4 z_n^2 z_{n+1}^2}+
\sqrt{1+4 z_n^2 z_{n-1}^2}\right).
\end{equation}
From the definition of $z_n$ and the assumptions of the
theorem, we know that for some independent set of 
positive solutions 
$\psi_n^{\pm}$ of \eqref{one}, with Wronskian 1, $z_n^2 = \psi_n^+ \psi_n^-$.
Therefore
\begin{align}
4 z_n^2 z_{n \pm 1}^2 &= 4 (\psi_n^+ \psi_{n \pm 1}^-)(\psi_n^- \psi_{n \pm 1}^+)
\nonumber\\
&= (\psi_n^+ \psi_{n \pm 1}^- + \psi_n^- \psi_{n \pm 1}^+)^2 - (\psi_n^+ \psi_{n \pm 1}^- - \psi_n^- \psi_{n \pm 1}^+)^2
\nonumber\\
&=(\psi_n^+ \psi_{n \pm 1}^- + \psi_n^- \psi_{n \pm 1}^+)^2 - 1.\nonumber
\end{align}
Hence \eqref{Vfromz alt} yields
\begin{align}
V^{[z]}_n+2 &= \frac{1}{2 \psi_n^+ \psi_n^-} \left(\psi_n^+ \psi_{n+1}^- + \psi_n^- \psi_{n+1}^+ + \psi_n^+ \psi_{n-1}^- + \psi_n^- \psi_{n-1}^+ \right)\nonumber\\
&= \frac{1}{2 \psi_n^+ \psi_n^-} \left(\psi_n^+ V_n \psi_{n}^- + \psi_n^- V_n \psi_{n}^+ \right)\nonumber\\
&= V_n+2,\nonumber
\end{align} 
as claimed, and establishes \eqref{GtoV}.
\end{proof}

It may well be asked at this stage why we have restricted ourselves to the situation where 
$G_{nn} > 0$, for at the formal level the calculations given above remain valid without 
assuming positivity.  
In the discrete setting, continuity is not available to connect the values of a solution
$\varphi_n$ as $n$ varies, and hence 
without an assumption such as positivity,
there is a degree of indeterminateness in 
defining solutions by a prescription such as \eqref{Sfromz}.
For \emph{some} choice of phases in \eqref{Vfromz alt}, it will
still be true that $V_n^{[z]}$ as defined there 
coincides with $V_n$,
but the implicit nature of these choices of 
square root is problematic.
Possibly a suitable
canonical choice of phase or
ideas from 
Teschl's oscillation theory for Jacobi operators
\cite{Tes} could help avoid implicit
definitions, and we 
hope to elaborate this point in future work. 

Returning to the case where $G_{nn} > 0$,
Formula \eqref{phifromz} suggests that $S_n$ can be related to
an Agmon distance \cite{Agm,HiSi}, that is, a metric $d_A(m,n)$
on the positive integer lattice such that every $\ell^2$ solution 
$\phi^-$ of \eqref{one}
satisfies a bound of the form
$$e^{d_A(0,n)} \phi_n^- \in \ell^\infty,$$ 
and that as a consequence $\phi_n^-$ 
decays rapidly as $n \to \infty$.  Thus if
$z_n$ is bounded we expect
an Agmon distance to be something like
$\sum_{\ell=m+1}^n{\ln S_{\ell}^{[z]}}$, assuming $n > m$.
(We write the Agmon distance in this way because
the triangle inequality is an equality on the integer lattice, 
which implies that any metric  
takes the form of a sum of quantities defined at values of $\ell$ from $m+1$ to $n$.) 
In Agmon's theory, however, 
it is desirable that the distance function be a quantity that
can be calculated directly from the potential alone
(or at least dominated by some such expression).
As we shall now see, understanding the diagonal of the Green matrix
allows the derivation
of Agmonish bounds.
We begin
by showing that 
$G_{nn}$ is comparable to $(V_n+2)^{-1}$ in a precise sense.

\begin{lemma}\label{G comp 1/V}
Suppose that $\liminf_{n \to \infty}{V_n} > C > 0$, and let 
$G_{m n}$ be any positive Green matrix for \eqref{one}
on the positive integers.  
Define
$$
K_A := \sqrt{1+\left(\frac{2}{C(C+2)}\right)^2} +  \frac{2}{C(C+2)}.
$$
Then for $n$ sufficiently large,
\begin{equation}\label{Glowerupper}
\frac{1}{V_n+2} \le G_{nn} \le \frac{K_A}{V_n+2}.
\end{equation}
Consequently,
\begin{align}\label{Slowerupper}
&\frac{\sqrt{(V_n+2)(V_{n-1}+2)}+ \sqrt{4 +(V_n+2)(V_{n-1}+2)}}{2 K_A}
\le S_n^{[z]}\nonumber\\
&\qquad  \le 
\frac{\sqrt{(V_n+2)(V_{n-1}+2)}+ \sqrt{4 +(V_n+2)(V_{n-1}+2)}}{2}.
\end{align}
\end{lemma}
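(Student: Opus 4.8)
The plan is to read both inequalities in \eqref{Glowerupper} off the nonlinear difference equation \eqref{GtoV}, using one qualitative fact about the solution basis, and then to obtain \eqref{Slowerupper} from \eqref{Glowerupper} by pure monotonicity. Throughout, ``$n$ sufficiently large'' means $n\ge N_0$ with $V_n>C$, so that $V_n+2>C+2>2$; write $z_n:=\sqrt{G_{nn}}>0$ and $r_n:=\varphi^+_n/\varphi^+_{n-1}$, $s_n:=\varphi^-_n/\varphi^-_{n-1}>0$ for the Bohl solution basis built from $z_n$ as in Theorem \ref{DaHa Thm} (the disconjugacy needed to have two positive solutions is, by the remark there, automatic for $n$ large under $\liminf V_n>0$). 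The lower bound in \eqref{Glowerupper} is immediate: each of the two square roots in \eqref{GtoV} is at least $1$, so $(V_n+2)G_{nn}\ge 1$, i.e.\ $G_{nn}\ge (V_n+2)^{-1}$; this uses only $V_n>-2$.

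The heart of the matter is to show $G_{nn}<1/C$ for $n$ large. Dividing the Wronskian relation $W[\varphi^-,\varphi^+]=1$ by $\varphi^-_n\varphi^+_n=G_{nn}$ gives $r_{n+1}-s_{n+1}=1/G_{nn}$, so it suffices to bound $r_{n+1}-s_{n+1}$ from below. Since $V_n>0$ for $n\ge N_0$, every positive solution of \eqref{one} is strictly convex there and hence eventually strictly monotone --- either increasing to $+\infty$ or decreasing to $0$ --- and two independent positive solutions cannot both decrease to $0$ (their Wronskian would tend to $0$). Taking $\varphi^-$ to be the principal (recessive) solution from which the Green matrix is built, it decreases to $0$ (exponentially, since $V_n$ is bounded away from $0$), so $0<s_n<1$, while $\varphi^+$ increases, so $r_n>1$; the Riccati recursion $r_{n+1}=(V_n+2)-1/r_n$ then gives $r_{n+1}>(V_n+2)-1>C+1$, whence $r_{n+1}-s_{n+1}>C$ and $G_{nn}<1/C$. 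I expect this boundedness step to be the real obstacle: \eqref{GtoV} by itself does not preclude $G_{nn}\to\infty$, and one genuinely needs the disconjugacy input that the Green matrix comes from a principal positive solution (equivalently, that it is a resolvent kernel bounded by $1/C$ on the half-line where $V>C$).

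Granting $G_{n+1,n+1}<1/C$ and $G_{n-1,n-1}<1/C$, the upper bound follows by a single substitution into \eqref{GtoV}: each square root is then smaller than $\sqrt{1+(4/C)\,G_{nn}}$, so $(V_n+2)G_{nn}<\sqrt{1+(4/C)\,G_{nn}}$. Squaring and solving the resulting quadratic inequality in $G_{nn}$ yields
$$
(V_n+2)G_{nn}\le \frac{2/C}{V_n+2}+\sqrt{1+\frac{4/C^2}{(V_n+2)^2}},
$$
and as both terms on the right decrease in $V_n$ while $V_n+2\ge C+2$, the right-hand side is at most $\frac{2}{C(C+2)}+\sqrt{1+\frac{4}{C^2(C+2)^2}}=K_A$. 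This is \eqref{Glowerupper}.

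Finally, \eqref{Slowerupper} is deduced from \eqref{Glowerupper} with no further appeal to \eqref{one}. Put $q:=z_n^2z_{n-1}^2=G_{nn}G_{n-1,n-1}$; then \eqref{Sfromz} reads $S_n^{[z]}=\frac{1}{2\sqrt q}+\frac12\sqrt{\frac{1}{q}+4}$, which is strictly decreasing in $q$. Multiplying the instances of \eqref{Glowerupper} at $n$ and at $n-1$ gives, with $P:=(V_n+2)(V_{n-1}+2)$, the sandwich $P^{-1}\le q\le K_A^2P^{-1}$. Evaluating the decreasing function at $q=P^{-1}$ gives the stated upper bound $\frac12\big(\sqrt P+\sqrt{P+4}\big)$ for $S_n^{[z]}$, and at $q=K_A^2P^{-1}$ it gives $\frac{1}{2K_A}\big(\sqrt P+\sqrt{P+4K_A^2}\big)$, which dominates the stated lower bound $\frac{1}{2K_A}\big(\sqrt P+\sqrt{P+4}\big)$ since $K_A\ge 1$.
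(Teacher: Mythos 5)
Your proof is correct, and its skeleton largely coincides with the paper's: the lower bound is read off \eqref{GtoV} because each square root exceeds $1$; the upper bound comes from first establishing $G_{nn}<1/C$, then inserting that bound only in the off-diagonal terms $G_{n\pm1\,n\pm1}$ of \eqref{Vfromz alt} and solving the resulting quadratic, with $V_n+2\ge C+2$ producing the constant $K_A$; and \eqref{Slowerupper} follows by monotone substitution of \eqref{Glowerupper} into \eqref{Sfromz}. The one step where you genuinely diverge is the bound $G_{nn}<1/C$. The paper obtains it spectrally: it identifies $G$ with the resolvent kernel of a self-adjoint realization of $-\Delta+V$ on $\ell^2([N,\infty))$ whose spectrum lies above $C$, so $G_{nn}=\langle e_n,(-\Delta+V)^{-1}e_n\rangle<C^{-1}$ by the spectral mapping theorem. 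You instead divide the Wronskian identity by $G_{nn}=\varphi^+_n\varphi^-_n$ to get $1/G_{nn}=\varphi^+_{n+1}/\varphi^+_n-\varphi^-_{n+1}/\varphi^-_n$ and use discrete convexity where $V_n>C$: the dominant solution is eventually increasing, so its Riccati ratio exceeds $V_n+1>C+1$, while the recessive solution is eventually decreasing, so its ratio lies in $(0,1)$, whence $1/G_{nn}>C$. Your route is more elementary---it avoids invoking the self-adjoint realization, the positivity of $-\Delta$ under the induced boundary condition, and the spectral mapping theorem---and it makes explicit the hypothesis both arguments quietly need, namely that the ``positive Green matrix'' is the one built with the recessive (principal) solution at the larger index; with a dominant solution there the diagonal is unbounded and \eqref{Glowerupper} fails, so ``any positive Green matrix'' must be read in the resolvent sense, exactly as you flag. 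What the paper's spectral route buys is brevity and the avoidance of the eventual-monotonicity discussion. Your treatment of \eqref{Slowerupper} (monotone decrease of $\bigl(1+\sqrt{1+4q}\bigr)/\bigl(2\sqrt{q}\bigr)$ in $q$, plus $K_A\ge 1$) is slightly sharper than required and is precisely what the paper means by ``inserting the bounds on $G_{nn}$ and collecting terms.''
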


\begin{remark} The upper bound is of the same form as 
a semiclassical upper bound proved in \cite{HaWo}. To simplify it,
$K_A$ could be replaced in these inequalities by
$$
\sqrt{1 + \frac{4}{C^2}} > K_A
$$
(see proof).
\end{remark}

\begin{proof}
The lower bound on $G_{nn}$ is immediate from Statement (3) of
Theorem \ref{DaHa Thm}, the left member of which is larger than 1.
\medskip

\noindent
The upper bound in \eqref{Glowerupper}
requires a spectral estimate.  The Green 
matrix $G_{mn}$ is the kernel of the resolvent operator of a self-adjoint realization of
$- \Delta + V$ on $\ell^2([N, \infty))$ for some $N$, 
where the boundary condition at $n=N,N+1$ is that satisfied by $\varphi_n^+$.  
Since $- \Delta > 0$ on this space (as an operator),
$\inf {\rm sp}(- \Delta + V) > C$, and hence, by the spectral mapping theorem,
$\|(- \Delta + V)^{-1}\|_{\rm op} < C^{-1}$.  Since 
$G_{nn} = \left\langle{e_n, (- \Delta + V)^{-1} e_n }\right\rangle$, 
where $\{e_n\}$ designate the standard unit vectors in $\ell^2$, it follows that
$G_{nn} < C^{-1}$.  Inserting this into
\eqref{Vfromz} would already imply \eqref{Glowerupper} with
$K_A$ replaced by $\sqrt{1+4/C^2}$.  To improve the constant, replace only 
the terms $G_{n\pm1\,n\pm1}$ in \eqref{Vfromz alt} by $1/C$, getting
\begin{equation}\label{quadrat}
(V_n+2) \le \frac{\sqrt{1 + \frac{4G_{nn}}{C}}}{G_{nn}}.
\end{equation}
Since
$$
\frac{\sqrt{1 + x y}}{x}
$$
is a decreasing function of $x$ when $x,y > 0$, an upper bound on
$G_{nn}$ is the larger root of the case of equality in \eqref{quadrat}
(which is effectively a quadratic).
The claimed upper bound with the constant $K_A$ results 
by keeping one factor $V_n+2$ in the solution of the quadratic, replacing 
the others by $C+2$.

\noindent
The bounds on $S_n^{[z]}$ result from inserting the bounds on $G_{nn}$ into
\eqref{Sfromz} and collecting terms.
\end{proof}

We can now state some Agmonish bounds.
\begin{cor}\label{Agmon}
Suppose that $\liminf_{n \to \infty}{V_n} > C > 0$ and fix a positive integer $m$.  Then 
the subdominant (i.e., eventually decreasing) 
solution $\phi^-$ of \eqref{one} satisfies
\item{(a)}
$$
\left(\prod_{\ell=m}^n{\frac{V_\ell+2}{K_A}}\right)\phi_n^-  \in \ell^\infty.
$$
\item{(b)}  If, in addition, $n({V_{n+1}-V_n}) \in \ell^1$, then
$$
\left(\prod_{\ell=m}^n{\frac{V_\ell+2 + \sqrt{V_\ell(V_\ell + 4)}}{2}}\right) \phi_n^-  \in \ell^\infty.
$$
\end{cor}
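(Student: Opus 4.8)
Since $\liminf_n V_n > C > 0$, the positivity hypotheses of Theorem~\ref{DaHa Thm} and Lemma~\ref{G comp 1/V} hold for all large $n$, so $z_n := \sqrt{G_{nn}}$ is well defined and $G_{nn}$ lies in a fixed compact subinterval of $(0,\infty)$ for large $n$. Identify the corollary's $\phi^-$ with $c\,\varphi^-$ for a nonzero constant $c$ (subdominant solutions of \eqref{one} are unique up to scaling and $\ell^\infty$ is scaling-invariant), where by Theorem~\ref{DaHa Thm}, $\varphi^-_n = z_n\prod_{k=m+1}^n\big(S_k^{[z]}\big)^{-1}$. For part (a) this is essentially all that is needed: since $S_k^{[z]} > 1$ and $S_k^{[z]} - \big(S_k^{[z]}\big)^{-1} = 1/(z_kz_{k-1})$, the lower bound on $G_{nn}$ in \eqref{Glowerupper} gives $\big(S_k^{[z]}\big)^{-1} \le K_A/\sqrt{(V_k+2)(V_{k-1}+2)}$; multiplying for $k = m+1,\dots,n$ the square roots telescope, and after inserting $z_n \le \sqrt{K_A/(V_n+2)}$ the remaining products collapse to yield $\big(\prod_{\ell=m}^n (V_\ell+2)/K_A\big)\,|\phi^-_n| \le |c|\sqrt{(V_m+2)/K_A}$. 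The only work here is the bookkeeping of the telescoping products.

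For part (b), the starting point is that $\widehat S_\ell := \tfrac12\big(V_\ell + 2 + \sqrt{V_\ell(V_\ell+4)}\big)$ is exactly the value $S_\ell^{[z]}$ would take for a potential frozen at $V_\ell$: it is the larger root of $\widehat S + \widehat S^{-1} = V_\ell+2$, so $\widehat S_\ell - \widehat S_\ell^{-1} = \sqrt{(V_\ell+2)^2-4}$, and $\widehat S_\ell = f(\widehat G_\ell)$ with $f(t) := \tfrac{1+\sqrt{1+4t^2}}{2t}$ and $\widehat G_\ell := \big((V_\ell+2)^2-4\big)^{-1/2}$ the value of $G_{\ell\ell}$ in the frozen-potential case (it solves $(V_\ell+2)\widehat G_\ell = \sqrt{1+4\widehat G_\ell^2}$, the frozen version of \eqref{Vfromz alt}). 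Since
\[
\Big(\prod_{\ell=m}^n\widehat S_\ell\Big)\phi^-_n = c\,\widehat S_m\,z_n\prod_{k=m+1}^n\frac{\widehat S_k}{S_k^{[z]}},
\]
with $\widehat S_m$ fixed and $z_n$ bounded, it suffices to prove $\sum_k\big|\ln S_k^{[z]} - \ln\widehat S_k\big| < \infty$. Now $S_k^{[z]} = f\big(\sqrt{G_{kk}G_{k-1,k-1}}\big)$ and $\widehat S_k = f(\widehat G_k)$, and $(\ln f)'(t) = -1/\big(t\sqrt{1+4t^2}\big)$ is bounded on the fixed compact positive interval in which \eqref{Glowerupper} confines $G_{kk}$, $G_{k-1,k-1}$ and $\widehat G_k$; hence, using also that these quantities are bounded away from $0$,
\[
\big|\ln S_k^{[z]} - \ln\widehat S_k\big| \le C\Big(|G_{kk}-\widehat G_k| + |G_{k-1,k-1}-\widehat G_{k-1}| + |\widehat G_k - \widehat G_{k-1}|\Big).
\]
The last term is summable because $\widehat G_\ell$ is a smooth function of $V_\ell$ on the relevant compact set and $\sum_k|V_k-V_{k-1}| \le \sum_k k\,|V_k-V_{k-1}| < \infty$. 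Thus part (b) reduces to the single estimate $\sum_k|G_{kk}-\widehat G_k| < \infty$: the diagonal of the Green matrix must track its frozen value $\big((V_k+2)^2-4\big)^{-1/2}$ in $\ell^1$.

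This reduction is the main obstacle, and it is here that the hypothesis $\sum_n n\,|V_{n+1}-V_n| < \infty$ is used. The plan is to treat $g_n := G_{nn}$ as the distinguished bounded positive solution of the nonlinear difference equation \eqref{GtoV} and $\widehat G_n$ as an approximate solution: substituting $\widehat G_n$ into \eqref{GtoV} and Taylor-expanding the square roots about $\widehat G_n$ (via the frozen identity $(V_n+2)\widehat G_n = \sqrt{1+4\widehat G_n^2}$) leaves a residual $R_n = O\big(|(\Delta V)_n| + |V_{n+1}-V_n|^2 + |V_n-V_{n-1}|^2\big)$, so that $\sum_n n\,|R_n| < \infty$. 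Linearizing \eqref{GtoV} about $\widehat G_n$, the error $\delta_n := g_n - \widehat G_n$ satisfies $\big(-\Delta + V_n(V_n+4)\big)\delta = (V_n+2)R$, i.e.\ $\delta_{n+1}+\delta_{n-1}-\gamma_n\delta_n = -(V_n+2)R_n$ with $\gamma_n = (V_n+2)^2-2$; the left-hand operator is bounded below by a positive constant for large $n$ (since $V_n(V_n+4) > 0$), its two homogeneous solutions split into a dominant and a subdominant one, and the fact that $g_n$ is the diagonal of a resolvent forces $\delta_n$ to be the bounded combination. Solving by variation of parameters and using the decay of the subdominant homogeneous solution on one side only expresses $\delta_n$ through a one-sided tail $\sum_{k\ge n}(\cdots)\,R_k$, so that on interchanging the order of summation $\sum_n|\delta_n|$ picks up a factor $k$, reducing everything to $\sum_k k\,|R_k| < \infty$, which holds under $\sum_k k\,|V_{k+1}-V_k| < \infty$. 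The two delicate points are (i) pinning down the correct (decaying) solution branch rigorously from the defining properties of the Green matrix rather than heuristically, and (ii) controlling the nonlinear remainders uniformly, for which the a priori two-sided bounds of Lemma~\ref{G comp 1/V} --- confining $g_n$ to a fixed compact subinterval of $(0,\infty)$ --- are precisely what is needed. Granting $\sum_k|G_{kk}-\widehat G_k| < \infty$, part (b) follows as above.
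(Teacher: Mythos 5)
Part (a) of your argument is correct and is essentially the paper's own proof: identify $\phi^-$ with a constant multiple of $\varphi^-$, use the boundedness of $z_n$, and telescope the bound on $S_\ell^{[z]}$ supplied by Lemma~\ref{G comp 1/V} after discarding the constant under the square root. (One small slip: the inequality $(S_k^{[z]})^{-1}\le K_A/\sqrt{(V_k+2)(V_{k-1}+2)}$ follows from the \emph{upper} bound $G_{nn}\le K_A/(V_n+2)$ in \eqref{Glowerupper} --- equivalently from the lower bound in \eqref{Slowerupper} with the $4$ dropped --- not from the lower bound on $G_{nn}$; also the bounds of the lemma hold only for $n$ large, which costs an adjustment of constants but nothing more.)

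Part (b) is where you diverge from the paper and where there is a genuine gap. The paper settles (b) in one line by citing the discrete Liouville--Green theorem of \cite{HaWo}, Theorem 4.1: under $n(V_{n+1}-V_n)\in\ell^1$ the subdominant solution is a bounded quantity times the reciprocal of the product in question. Your plan is instead self-contained: reduce (b) to $\sum_k|G_{kk}-\widehat G_k|<\infty$ with $\widehat G_k=((V_k+2)^2-4)^{-1/2}$, then obtain this from a perturbation analysis of the diagonal equation \eqref{GtoV}. The reduction itself is sound (and the frozen-coefficient identifications of $\widehat S_\ell$ and $\widehat G_\ell$ are correct), but the key estimate is only a program, and as sketched it does not close. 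After linearizing \eqref{GtoV} about $\widehat G_n$, the equation for $\delta_n=G_{nn}-\widehat G_n$ carries a remainder that is \emph{quadratic in $\delta_n$ and its neighbors}, and your assertion that the two-sided bounds of Lemma~\ref{G comp 1/V} are ``precisely what is needed'' to control it is not correct: those bounds give only $|\delta_n|=O(1)$, so the quadratic remainder is of the same order as the linear terms, and the variation-of-parameters/convolution estimate cannot absorb it --- you cannot even conclude that $\delta_n$ is small, let alone summable. To run your scheme one must first prove smallness, $\delta_n\to 0$ (for instance via Poincar\'e--Perron ratio asymptotics for $\psi_n^\pm$, available because $V_n$ converges when $\sum|V_{n+1}-V_n|<\infty$, combined with constancy of the Wronskian to pin down $G_{nn}=\psi_n^+\psi_n^-$), and then absorb the resulting $o(|\delta|)$ remainder by a discrete Gronwall or successive-approximation argument using the exponentially decaying kernel of $-\Delta+V(V+4)$; your point (i) on selecting the decaying branch likewise needs an actual argument rather than an acknowledgment. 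In short, the hardest step --- that the Green diagonal tracks its frozen value in $\ell^1$ --- is the very content of the Liouville--Green approximation the paper invokes, and in your write-up it is announced rather than proved; either supply these missing steps or follow the paper and quote \cite{HaWo}, Theorem 4.1.
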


\begin{proof}
The ansatz \eqref{Sfromz} allows an identification of 
$\phi^-$ with a constant multiple of $\varphi^-$,
in the representation
\eqref{phifromz}.  
Because $z_n$ is bounded, so is
$$
\left(\prod_{\ell}^n{S_\ell^{[z]}}\right) \varphi_n^-.
$$
We then use the lower bound on $S_\ell^{[z]}$ from the lemma, but 
simplify by dropping the 4, which allows the product to telescope in a pleasing way,
producing (a).

For (b) we note that the additional assumption on $V_n$ allows us to conclude that 
$\varphi$ is well-approximated by a Liouville-Green expression in 
\cite{HaWo}, Theorem 4.1, which is a bounded quantity times the reciprocal of the expression in parentheses.
\end{proof}

\noindent
Thus when
$\liminf_{n \to \infty}{V_n} > 0$,
a suitable Agmon distance $d_A(m,n)$ for \eqref{one}
is given by 
$$
\sum_{\ell=m+1}^n{\left(\ln(V_l +2) - \ln{K_A}\right)},
$$
or by 
$$
\sum_{\ell=m+1}^n{\ln{\frac{V_\ell+2 + \sqrt{V_\ell(V_\ell + 4)}}{2}}},
$$
provided that 
$n(V_{n+1}-V_n) \in \ell^1$.

We close with a Darboux-type factorization for a generic discrete Schr\"odinger equation
\eqref{one}.  A Darboux-type factorization for general 
Jacobi operators was previously considered by Gesztesy and Teschl in
\cite{GeTe2}.  As in \S\ref{B&G} the novel feature of this factorization
is that it is constructed using the diagonal 
of the Green matrix.

To this end, choose a Green matrix such that $G_{nn}$ is nonvanishing for a range of values of $n$,
and define a solution $\varphi_n^+$ according to \eqref{Sfromz}.  The phase of the
square roots is chosen (if necessary) to ensure that $V_n^{[z]}$ from 
\eqref{Vfromz alt} equals $V_n$.
\begin{theorem}
Given a Green matrix such that the diagonal $G_{kk}$ is nonvanishing for
$n-1 \le k \le n+2$, and choosing the phase of
the square roots as described above,
\begin{equation*}\label{discDarb}
- \Delta + V_n = \mathcal{R} \left[- \nabla^+ -1 +\frac{2 G_{nn}}{1+(1+4 G_{nn} G_{n+1\, n+1})^{1/2}}\right]  \left[{\nabla^+ + 1 - \frac{1+(1+4 G_{nn} G_{n+1\, n+1})^{1/2}}{2 G_{nn}}}\right],
\end{equation*}
where $\mathcal{R}$ is the shift operator such that $[\mathcal{R} f]_n = f_{n-1}$
and the right-difference operator is defined by
$\left[\nabla^{+} f\right]_n := f_{n+1} - f_n$.
\end{theorem}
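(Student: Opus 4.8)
The strategy is to expand the asserted product of first-order difference operators and to recognize, after collecting terms, precisely the nonlinear relation \eqref{Vfromz} (equivalently \eqref{GtoV}) that Theorem~\ref{DaHa Thm} has already supplied; no new analytic input is required. First I would introduce the shorthand $z_n := \sqrt{G_{nn}}$ and let $S_n^{[z]}$ be given by \eqref{Sfromz}, with the phase of the square roots chosen, as in the paragraph preceding the theorem, so that $V_n^{[z]} = V_n$. A one-line computation from \eqref{Sfromz} and $z_n^2 = G_{nn}$ shows that the two coefficients occurring in the bracketed factors are mutual reciprocals,
\[
\frac{1 + (1 + 4 G_{nn} G_{n+1\,n+1})^{1/2}}{2 G_{nn}} = \frac{z_{n+1}}{z_n}\,S_{n+1}^{[z]} =: b_n, \qquad \frac{2 G_{nn}}{1 + (1 + 4 G_{nn} G_{n+1\,n+1})^{1/2}} = \frac{z_n}{z_{n+1}\,S_{n+1}^{[z]}} = \frac{1}{b_n},
\]
so that the two factors act on a sequence $f$ by $[(\nabla^{+} + 1 - b_n)f]_n = f_{n+1} - b_n f_n$ and $[(-\nabla^{+} - 1 + 1/b_n)g]_n = -g_{n+1} + g_n/b_n$. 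As a consistency check, these annihilate, respectively, $\varphi^+$ and $\varphi^-$ of \eqref{phifromz}, the discrete counterpart of $\mathcal{D}^{\pm}[Z]\phi^{\pm} = 0$ from \S\ref{B&G}.

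Next I would simply compose the two factors. Evaluating the rightmost factor and then the leftmost on $f$ gives
\[
\bigl[(-\nabla^{+} - 1 + 1/b_n)(\nabla^{+} + 1 - b_n)f\bigr]_n = -f_{n+2} + b_{n+1} f_{n+1} + f_{n+1}/b_n - f_n,
\]
and then applying the backward shift $\mathcal{R}$ on the left produces
\[
\bigl[\mathcal{R}(-\nabla^{+} - 1 + 1/b_n)(\nabla^{+} + 1 - b_n)f\bigr]_n = -f_{n+1} - f_{n-1} + \bigl(b_n + 1/b_{n-1}\bigr)f_n.
\]
Since $[(-\Delta + V)f]_n = -f_{n+1} - f_{n-1} + (V_n + 2)f_n$, the theorem is reduced to the pointwise identity $b_n + 1/b_{n-1} = V_n + 2$.

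But that identity is exactly what has already been established: $b_n + 1/b_{n-1} = \frac{z_{n+1}}{z_n}S_{n+1}^{[z]} + \frac{z_{n-1}}{z_n S_n^{[z]}}$, which equals $V_n^{[z]} + 2$ by the definition \eqref{Vfromz} (or its rewriting \eqref{Vfromz alt}), and $V_n^{[z]} = V_n$ by the choice of phase. The hypothesis that $G_{kk}$ be nonvanishing for $n - 1 \le k \le n + 2$ is precisely what guarantees that $b_{n-1}$, $b_n$, and $b_{n+1}$ -- hence all the square-root quotients appearing in the two factors near the site $n$ and in their product -- are well defined.

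I expect the only place to be careful is the index bookkeeping: one must keep track of which index each coefficient carries through the composition and through the shift $\mathcal{R}$, so that the coefficient of $f_n$ emerges as $b_n + 1/b_{n-1}$ rather than, say, $b_n + 1/b_n$. There is no deeper obstacle; all of the nonlinear content has been absorbed into Theorem~\ref{DaHa Thm}. (If desired, the same computation may instead be phrased through formal adjoints -- writing $-\Delta + V = A^- A^+$ up to the shift $\mathcal{R}$, with $A^-$ the formal adjoint of the forward-difference operator that annihilates $\varphi^+$, in parallel with $-d^2/dx^2 + V = (\mathcal{D}[\overline{Z}]^{\mp})^{*}\,\mathcal{D}[Z]^{\pm}$ from \S\ref{B&G} -- but the direct expansion above is shorter.)
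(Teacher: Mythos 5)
Your proposal is correct and follows essentially the same route as the paper: the paper writes the right factor as $\nabla^+ + Q_n$ with $Q_n = 1 - z_{n+1}S_{n+1}^{[z]}/z_n$ (your $1-b_n$) and the left factor as $-\nabla^+ + Q_n/(1-Q_n)$ (your $-\nabla^+ -1 + 1/b_n$), expands the product to get a shifted discrete Schr\"odinger operator with zero coefficient on $f_n$, and identifies the potential term $\frac{Q_{n-1}}{1-Q_{n-1}} - Q_n = b_n + 1/b_{n-1} - 2$ with $V_n$ via \eqref{Vfromz}, exactly as you do. One small inaccuracy in your aside: the left factor does not annihilate $\varphi^-$ (it annihilates $\varphi^-_n/G_{nn}$, since $-\varphi^-_{n+1} + \varphi^-_n/b_n = \frac{\varphi^-_n}{S_{n+1}^{[z]}}\left(\frac{z_n}{z_{n+1}} - \frac{z_{n+1}}{z_n}\right) \ne 0$ in general, mirroring the continuous case where the left factor is an adjoint-type operator rather than ${\mathcal D}^\mp$); this remark is not used in your argument, so the proof stands.
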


\begin{remark}
As with \eqref{Darboux}, there is a second factorization, with shifts and differences reversed, 
and $n+1$ replaced by $n-1$.
\end{remark}

\begin{proof}
Writing
$$
Q_n := 1 - \frac{1+(1+4 G_{nn} G_{n+1\, n+1})^{1/2}}{2 G_{nn}} =1 - \frac{z_{n+1} S_{n+1}^{[z]}}{z_n},
$$
with $S_{k}^{[z]}$defined in \eqref{Sfromz}, 
we first note that, by a simple calculation,
\begin{equation}\label{1st order}
\left[\nabla^+  +Q_n \right]\varphi^+ = 0.
\end{equation}
This motivates calculating
$$
H = \left[- \nabla^+ +\frac{Q_n}{1-Q_n}\right] \left[\nabla^+ +Q_n\right],
$$
which is well-defined because $Q_n \ne 1$, owing to \eqref{Sfromz}
with $z_k$ nonvanishing.  The left factor was chosen to produce 
a convenient cancellation, ensuring that
$H$ has the form of a discrete Schr\"odinger equation, 
with a shifted index:
$$
(H f)_n = (- \Delta f)_{n+1} + \left(\frac{Q_n}{1-Q_n} - Q_{n+1}\right) f_{n+1} + 0\cdot f_n.
$$
We now verify that when the index is shifted back, the potential term is indeed $V_n$:
\begin{align*}
\left(\frac{Q_{n-1}}{1-Q_{n-1}} - Q_{n}\right) &= \left(\frac{z_{n-1}}{z_n S_n^{[z]}} -1\right) -
\left(1 - \frac{z_{n+1}S_{n+1}^{[z]}}{z_n}\right)\\
&= -2 + \frac{z_{n-1}}{z_n S_n^{[z]}} + \frac{z_{n+1}S_{n+1}^{[z]}}{z_n},
\end{align*}
which reduces to $V_n$ according to
\eqref{Vfromz}.
\end{proof}

\end{document}